\date{\today}
\newtheorem{theorem}{Theorem}
\newtheorem{proposition}[theorem]{Proposition}
\newtheorem{corollary}[theorem]{Corollary}
\newtheorem{lemma}[theorem]{Lemma}
\theoremstyle{definition}
\newtheorem{example}[theorem]{Example}
\begin{document}

\title[On semitopological $\alpha$-bicyclic monoid]{On semitopological $\alpha$-bicyclic monoid}

\author[S.~Bardyla]{Serhiy~Bardyla}
\address{Faculty of Mathematics, National University of Lviv,
Universytetska 1, Lviv, 79000, Ukraine}
\email{sbardyla@yahoo.com}

\keywords{topological inverse semigroup, topological semigroup, semitopological semigroup, $\alpha$-bicyclic semigroup}

\subjclass[2010]{Primary 20M18, 22A15. Secondary 55N07}

\begin{abstract}
In this paper we consider a semitopological $\alpha$-bicyclic monoid $\mathcal{B}_{\alpha}$ and prove that it is algebraically isomorphic to a semigroup of all order isomorphisms between the principal upper sets of the ordinal $\omega^{\alpha}$. We prove that for every ordinal $\alpha$ for every $(a,b)\in \mathcal{B_{\alpha}}$ if either $a$ or $b$ is a non-limit ordinal then $(a,b)$ is an isolated point in $\mathcal{B}_{\alpha}$. We show that for every ordinal $\alpha<\omega+1$ every  locally compact semigroup topology on $\mathcal{B}_{\alpha}$ is discrete. However, we construct an example of a non-discrete locally compact topology $\tau_{lc}$ on $\mathcal{B}_{\omega+1}$ such that $(\mathcal{B}_{\omega+1},\tau_{lc})$ is a topological inverse semigroup. This example shows that there is a gap in \cite[Theorem~2.9]{Hogan-1984}, where is stated that for every ordinal $\alpha$ there is only discrete locally compact inverse semigroup topology on $\mathcal{B_{\alpha}}$.
\end{abstract}

\maketitle
In this paper all topological spaces are assumed to be Hausdorff. By $\mathbb{N}$ we denote the set of all positive integers. A semigroup $S$ is called inverse if for every $x\in S$ there exists a unique $y\in S$ such that $xyx = x$ and $yxy = y$. Later such an element $y$ will be denoted by $x^{-1}$ and will be called the inverse of $x$. The map $inv: S\rightarrow S$ which assigns to every $s\in S$ its inverse is called inversion. By $\omega$ we denote the first infinite ordinal.
A {\it topological} ({\it inverse}) {\it semigroup} is a topological space together with a continuous semigroup operation (and an~inversion, respectively). Obviously, the inversion defined on a topological inverse semigroup is a homeomorphism. If $S$ is a~semigroup (an~inverse semigroup) and $\tau$ is a topology on $S$ such that $(S,\tau)$ is a topological (inverse) semigroup, then we shall call $\tau$ an (\emph{inverse}) \emph{semigroup}  \emph{topology} on $S$. A {\it semitopological semigroup} is a topological space together with a separately continuous semigroup operation.
Let $f$ be a map between two partial ordered sets $(A,\leq_{A})$ and $(B,\leq_{B})$, then we shall call $f$ monotone if for every $a,b\in A$ if $a\leq_{A} b$ then $f(a)\leq_{B} f(b)$. We shall call $f$ an order isomorphism if $f$ is a monotone bijection and its inverse map $f^{-1}$ is also monotone.

For a partially ordered set $(A,\leq)$, for an arbitrary $X\subset A$ and $x\in A$ we write:
\begin{itemize}
  \item[$1)$] ${\downarrow} X=\{y\in A: $ there exists $ x\in X$ such that $y\leq x\}$;
  \item[$2)$] ${\uparrow} X=\{y\in A: $ there exists $ x\in X$ such that $x\leq y\}$;
  \item[$3)$] ${\downarrow} x={\downarrow}\{x\}$;
  \item[$4)$] ${\uparrow} x={\uparrow}\{x\}$;
  \item[$5)$] $X$ is a lower set if $X={\downarrow} X$;
  \item[$6)$] $X$ is an upper set if $X={\uparrow} X$;
  \item[$7)$] $X$ is a principal lower set if $X={\downarrow} x$ for some $x\in X$;
 \item[$8)$] $X$ is a principal upper set if $X={\uparrow} x$ for some $x\in X$.
\end{itemize}

The bicyclic monoid ${\mathscr{B}}(p,q)$ is the semigroup with the identity $1$ generated by two elements $p$ and $q$ subjected only to the condition $pq=1$. The distinct elements of ${\mathscr{B}}(p,q)$ are exhibited in the following useful array
\begin{equation*}
\begin{array}{ccccc}
  1      & p      & p^2    & p^3    & \cdots \\
  q      & qp     & qp^2   & qp^3   & \cdots \\
  q^2    & q^2p   & q^2p^2 & q^2p^3 & \cdots \\
  q^3    & q^3p   & q^3p^2 & q^3p^3 & \cdots \\
  \vdots & \vdots & \vdots & \vdots & \ddots \\
\end{array}
\end{equation*}
and the semigroup operation on ${\mathscr{B}}(p,q)$ is defined as
follows:
\begin{equation*}
    q^kp^l\cdot q^mp^n=q^{k+m-\min\{l,m\}}p^{l+n-\min\{l,m\}}.
\end{equation*}
It is well known that the bicyclic monoid ${\mathscr{B}}(p,q)$ is a bisimple (and hence simple) combinatorial $E$-unitary inverse semigroup and every non-trivial congruence on ${\mathscr{B}}(p,q)$ is a group congruence \cite{Clifford-Preston-1961-1967}. Also, a classic Andersen Theorem states that \emph{a simple semigroup $S$ with an idempotent is completely simple if and only if $S$ does not contain an isomorphic copy of the bicyclic semigroup} (see \cite{Andersen-1952} and \cite[Theorem~2.54]{Clifford-Preston-1961-1967}).
Observe that the bicyclic monoid can be represented as a semigroup of isomorphisms between principal upper sets of partially ordered set $(\mathbb{N},\leq)$ (see \cite{Lawson-1998}).
The bicyclic semigroup admits only the discrete semigroup topology and if a topological semigroup S contains it
as a dense subsemigroup then ${\mathscr{B}}(p,q)$ is an open subset of S (see \cite{Eberhart-Selden-1969}). In \cite{Bertman-West-1976} and \cite{Fihel} this result was extended for the case of semitopological semigroups and generalized bicyclic semigroup respectively. The problem of embedding of the bicyclic monoid into compact-like topological semigroups was discussed in \cite{BanDimGut-2010}, \cite{BanDimGut-2009}, \cite{GutRep-2007}.
Also, in \cite{Eberhart-Selden-1969} was described the closure of the bicyclic monoid ${\mathscr{B}}(p,q)$ in a locally compact topological inverse semigroup. In \cite{Gut-2015} was proved that a Hausdorff locally compact semitopological bicyclic semigroup with adjoined zero $0$ is either compact or discrete.

Among the other natural generalizations of bicyclic semigroup the polycyclic monoids and $\alpha$-bicyclic monoids play the major role.

The polycyclic monoids were introduced in \cite{Nivat-Perrot-1970}. In \cite{BardGut-2016(1)} there was introduced a notion of the $\lambda$-polycyclic monoid, which is a generalization of the polycyclic monoid and it was proved therein that for every cardinal $\lambda>1$ any $\lambda$-polycyclic monoid $\mathcal{P_{\lambda}}$ can be represented as a subsemigroup of the semigroup of all order isomorphisms between principal lower sets of the $\lambda$-ary tree with adjoined zero. In \cite{Jones-2011} and \cite{Jones-Lawson-2014} were investigated algebraic properties of the polycyclic monoid. The paper \cite{Mesyan-Mitchell-Morayne-Peresse-2013} is devoted to topological properties of the graph inverse semigroups which are the generalization of polycyclic monoids and it is proved in it that for every finite graph $E$ every locally compact semigroup topology on the graph inverse semigroup over $E$ is discrete, which implies that for every positive integer $n$ every locally compact semigroup topology on the $n$-polycyclic monoid is discrete. Algebraic and topological properties of the $\lambda$-polycyclic monoid were studied in \cite{BardGut-2016(1)} and it was proved therein that for every non-zero cardinal $\lambda$ every locally compact semigroup topology on the $\lambda$-Polycyclic monoid is discrete. In \cite{BardGut-2016(2)} the authors investigated the closure of $\lambda$-polycyclic monoid in topological inverse semigroups.

However, in this paper we are mostly concerned on the $\alpha$-bicyclic monoid. This monoid was introduced in \cite{Hogan-1973}.
Let $\alpha$ be an arbitrary ordinal and $<$ be the usual order on $\alpha$ such that $a<b$ iff $a\in b$ for every $a,b\in \alpha$. For every $a,b\in \alpha$ we write $a\leq b$ iff either $a=b$ or $a\in b$. Clearly, $\leq$ is a partial order on $\alpha$. By $+$ we will denote the usual ordinal addition. An ordinal $\alpha$ is said to be prime if it cannot be represented as a sum of two ordinals which are contained in $\alpha$. For every ordinals $a,b$ such that $a>b$ we let $c=a-b$ if $a=b+c$. Clearly, for every ordinals $a>b$ there exists a unique ordinal $c$ such that $a=b+c$. For more about ordinals see \cite{Kunen}, \cite{SierpWacl-1965} or \cite{Weiss}.
By the $\alpha$-bicyclic monoid $\mathcal{B_{\alpha}}$ we mean the set $\omega^{\alpha}\times\omega^{\alpha}$ endowed with the following binary operation:

\begin{equation*}
    (a,b)\cdot (c,d)=
    \left\{
      \begin{array}{cl}
        (a+(c-b),d), & \hbox{if~~} b\leq c;\\
        (a,d+(b-c)),   & \hbox{if~~} b>c;
      \end{array}
    \right.
\end{equation*}

Later on we will write $(a,b)(c,d)$ instead of $(a,b)\cdot(c,d)$.

In \cite{Hogan-1973}, there were considered algebraic properties of bisimple semigroups with well-ordered idempotents. In \cite{Selden 1985}, a non-discrete inverse semigroup topology on $\mathcal{B}_{2}$ was constructed. In \cite{Hogan 1987}, there were investigated inverse semigroup topologies on $\mathcal{B}_{\alpha}$.

Observe that every upper set of arbitrary ordinal $\alpha$ is principal.

By $\mathscr{J}_{\omega^\alpha}^{\!\!\nearrow}$ we shall denote the semigroup of all order isomorphisms between the principal upper sets of the ordinal $\omega^{\alpha}$ endowed with multiplication of composition of partial maps.

Topological semigroups of partial monotone bijections of linearly ordered sets were investigated in \cite{Chuchman-Gutik}, \cite{GutRep2}, \cite{GutRep1}. In \cite{GutRep2} it was proved that every locally compact topology on the semigroup of all partial cofinite monotone injective transformations of the set of positive integers is discrete. In \cite{Chuchman-Gutik} the authors proved that every Baire topology on the semigroup of almost monotone injective co-finite partial selfmaps of positive integers is discrete. In \cite{GutRep1}, it was proved that every Baire topology on the semigroup of all monotone injective partial selfmaps of the set of integers having cofinite domain and image is discrete. We observe that in \cite{Chuchman-Gutik} and \cite{GutRep1}, non-discrete non-Baire inverse semigroup topologies on the corresponding semigroups are constructed.

In this paper we consider a semitopological $\alpha$-bicyclic monoid $\mathcal{B}_{\alpha}$ and prove that it is algebraically isomorphic to a semigroup of all order isomorphisms between the principal upper sets of the ordinal $\omega^{\alpha}$. We prove that for every ordinal $\alpha$ for every $(a,b)\in \mathcal{B_{\alpha}}$ if either $a$ or $b$ is a non-limit ordinal then $(a,b)$ is an isolated point in $\mathcal{B}_{\alpha}$. We show that for every ordinal $\alpha<\omega+1$ every  locally compact semigroup topology on $\mathcal{B}_{\alpha}$ is discrete. However, we construct an example of a non-discrete locally compact topology $\tau_{lc}$ on $\mathcal{B}_{\omega+1}$ such that $(\mathcal{B}_{\omega+1},\tau_{lc})$ is a topological inverse semigroup. This example shows that there is a gap in \cite[Theorem~2.9]{Hogan-1984}, where is stated that for every ordinal $\alpha$ there is only discrete locally compact inverse semigroup topology on $\mathcal{B_{\alpha}}$.

\begin{proposition}\label{l1} For every ordinal $\alpha$ the semigroup $\mathscr{J}_{\omega^\alpha}^{\!\!\nearrow}$ of all order isomorphisms between principle upper sets of the ordinal $\omega^{\alpha}$ is isomorphic to the $\alpha$-bicyclic monoid $\mathcal{B_{\alpha}}$.
\end{proposition}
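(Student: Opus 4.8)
The plan is to build an explicit isomorphism $\phi\colon\mathcal{B}_{\alpha}\to\mathscr{J}_{\omega^\alpha}^{\!\!\nearrow}$ and check it is a bijective homomorphism. The whole construction rests on one order-theoretic fact: $\omega^{\alpha}$ is additively indecomposable, i.e.\ $\beta+\gamma<\omega^{\alpha}$ whenever $\beta,\gamma<\omega^{\alpha}$ (equivalently $\omega^{\alpha}$ is prime in the sense defined above). I would record this first and draw the consequence that for every $a<\omega^{\alpha}$ the principal upper set ${\uparrow}a=[a,\omega^{\alpha})$ has order type exactly $\omega^{\alpha}$: if $a+c=\omega^{\alpha}$ then $c=\omega^{\alpha}$, since $c<\omega^{\alpha}$ would force $a+c<\omega^{\alpha}$. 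In particular any two principal upper sets of $\omega^{\alpha}$ are order isomorphic, and since they are well ordered, the order isomorphism between a given ordered pair of them is unique.

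Next I would define the candidate map. For $(a,b)\in\mathcal{B}_{\alpha}$ put $\phi(a,b)=f_{a,b}$, where $f_{a,b}\colon{\uparrow}b\to{\uparrow}a$ is given by $f_{a,b}(x)=a+(x-b)$ for $x\geq b$ (here $x-b$ is the unique ordinal with $b+(x-b)=x$). I must check $f_{a,b}$ is a well-defined order isomorphism between principal upper sets: it lands in $\omega^{\alpha}$ because $x-b\leq x<\omega^{\alpha}$ forces $a+(x-b)<\omega^{\alpha}$ by indecomposability; it is monotone and injective by left cancellation of ordinal addition; and it is onto ${\uparrow}a$ because every $y\geq a$ equals $f_{a,b}(b+(y-a))$ with $b+(y-a)<\omega^{\alpha}$, again by indecomposability. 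Thus $\phi$ is well defined. Bijectivity then follows easily: injectivity holds since $b=\min(\operatorname{dom}f_{a,b})$ and $a=\min(\operatorname{ran}f_{a,b})$ recover $(a,b)$; and for surjectivity, any $g\in\mathscr{J}_{\omega^\alpha}^{\!\!\nearrow}$ with $g\colon{\uparrow}b\to{\uparrow}a$ must coincide with the unique order isomorphism ${\uparrow}b\to{\uparrow}a$, namely $f_{a,b}=\phi(a,b)$.

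The core of the argument is the homomorphism identity, verified by composing partial maps and matching the two branches of the operation on $\mathcal{B}_{\alpha}$. Computing $f_{a,b}\circ f_{c,d}$, the domain analysis splits exactly according to whether $b\leq c$ or $b>c$. If $b\leq c$, then $f_{c,d}(x)=c+(x-d)\geq b$ for all $x\in{\uparrow}d$, so the composite has domain ${\uparrow}d$ and, using $c=b+(c-b)$, evaluates to $a+(c-b)+(x-d)=f_{a+(c-b),\,d}(x)$, matching $(a,b)(c,d)=(a+(c-b),d)$. If $b>c$, the composite is defined precisely for $x\geq d+(b-c)$ and evaluates to $a+\bigl(x-(d+(b-c))\bigr)=f_{a,\,d+(b-c)}(x)$, matching $(a,b)(c,d)=(a,d+(b-c))$. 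Hence $\phi\bigl((a,b)(c,d)\bigr)=\phi(a,b)\,\phi(c,d)$ with the right-to-left composition convention on $\mathscr{J}_{\omega^\alpha}^{\!\!\nearrow}$. (If composition there is read left to right, I would instead send $(a,b)$ to the isomorphism ${\uparrow}a\to{\uparrow}b$, $x\mapsto b+(x-a)$, and the identical computation goes through with the roles interchanged.)

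I expect the main obstacle to be exactly this last step: keeping the ordinal bookkeeping honest, since ordinal addition is noncommutative and the ``subtraction'' $x-b$ is only a right inverse to $b+(\cdot)$. Indecomposability of $\omega^{\alpha}$ must be invoked repeatedly to guarantee every computed value stays below $\omega^{\alpha}$, and one has to be careful that the two cases of the defining formula line up with the two cases in the domain analysis of the composite (noting they agree on the boundary $b=c$). Everything else is routine.
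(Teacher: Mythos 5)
Your proof is correct and is essentially the paper's argument run in the opposite direction: the paper defines $h(f)=(a,b)$ for an order isomorphism $f\colon[a,\omega^{\alpha})\to[b,\omega^{\alpha})$ and invokes the same two facts you use (uniqueness of order isomorphisms between well-ordered sets for injectivity, and primality/additive indecomposability of $\omega^{\alpha}$ for surjectivity), leaving the homomorphism property as ``simple verifications''. Your explicit formula $f_{a,b}(x)=a+(x-b)$ and the two-case composition computation simply supply in detail the verification the paper omits.
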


\begin{proof}
Observe that every principal upper set of $\omega^{\alpha}$ is an interval $[a,\omega^{\alpha})$ for some $a\in\omega^{\alpha}$.
Define a map $h:\mathscr{J}_{\omega^\alpha}^{\!\!\nearrow}\rightarrow \mathcal{B_{\alpha}}$ in the following way:
for an arbitrary order isomorphism $f:[a,\omega^{\alpha})\rightarrow [b,\omega^{\alpha})$ put $h(f)= (a,b)$. Clearly, if there exists an order isomorphism between two intervals $[a,\omega^{\alpha})$ and $[b,\omega^{\alpha})$ then it is unique.
Thus the map $h$ is injective. Since $\omega^{\alpha}$ is a prime ordinal for every $a,b< \omega^{\alpha}$ the upper sets $[a,\omega^{\alpha})$ and $[b,\omega^{\alpha})$ of $\omega^{\alpha}$ are order isomorphic and hence the map $h$ is surjective. Simple verifications show that the map $h$ is a homomorphism.
\end{proof}

\begin{lemma}\label{l2} Let $(\mathcal{B_{\alpha}},\tau)$ be a semitopological semigroup. Then for every ordinal $a\in \omega^{\alpha}$ the elements $(0,a)$ and $(a,0)$ are isolated points in $(\mathcal{B_{\alpha}},\tau)$.
\end{lemma}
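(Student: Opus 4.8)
The plan is to reduce everything to the single case $a=0$, i.e. to proving that the identity $(0,0)$ is an isolated point, and then to transfer isolation to the points $(0,a)$ and $(a,0)$ by separate continuity. For the transfer step I would use the right translation $\rho\colon x\mapsto x(a,0)$ and the left translation $\lambda\colon x\mapsto (0,a)x$, both continuous since $(\mathcal{B}_\alpha,\tau)$ is semitopological. A direct inspection of the multiplication shows that $x(a,0)=(0,0)$ forces $x=(0,a)$ and that $(0,a)x=(0,0)$ forces $x=(a,0)$; in other words $\rho^{-1}(\{(0,0)\})=\{(0,a)\}$ and $\lambda^{-1}(\{(0,0)\})=\{(a,0)\}$. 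Hence, as soon as $\{(0,0)\}$ is known to be open, the two singletons above are preimages of an open set under continuous maps and are therefore open, so $(0,a)$ and $(a,0)$ are isolated.

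The core of the argument, and the step I expect to be the main obstacle, is showing that $(0,0)$ is isolated. (For $\alpha=0$ the monoid is a single point and there is nothing to prove, so assume $\alpha\geq 1$, whence $1<\omega^{\alpha}$ and $(1,1)\neq(0,0)$.) The key algebraic observation is that the idempotent $(1,1)$ acts as a one-sided identity off the axes: one checks that $(c,d)(1,1)=(c,d)$ whenever $d\geq 1$ and $(1,1)(c,d)=(c,d)$ whenever $c\geq 1$, while $(0,0)(1,1)=(1,1)(0,0)=(1,1)\neq(0,0)$. I would then argue by contradiction. If $(0,0)$ is not isolated, then it lies in the closure of $\mathcal{B}_\alpha\setminus\{(0,0)\}=P\cup Q$, where $P=\{(c,d):c\geq 1\}$ and $Q=\{(c,d):d\geq 1\}$; since the closure of a finite union is the union of the closures, $(0,0)\in\overline{P}$ or $(0,0)\in\overline{Q}$.

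Suppose $(0,0)\in\overline{Q}$. The right translation $R_{(1,1)}\colon x\mapsto x(1,1)$ is continuous and, by the observation above, coincides with the identity map on $Q$. Two continuous maps into a Hausdorff space that agree on a dense subset agree on its closure, so $R_{(1,1)}$ equals the identity on $\overline{Q}$; evaluating at $(0,0)\in\overline{Q}$ gives $(1,1)=(0,0)(1,1)=(0,0)$, a contradiction. The case $(0,0)\in\overline{P}$ is symmetric, using the left translation $L_{(1,1)}\colon x\mapsto (1,1)x$. (Equivalently, one may phrase this with a net in $Q$ converging to $(0,0)$, apply $R_{(1,1)}$ to it, and invoke uniqueness of limits.) This establishes that $(0,0)$ is isolated and, combined with the reduction above, finishes the proof.

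The only genuinely delicate point is the isolation of $(0,0)$: everything hinges on spotting that $(1,1)$ is a two-sided partial identity which fixes every point off the coordinate axes but moves $(0,0)$, thereby converting the topological hypothesis ``$(0,0)$ is a limit point'' into the algebraic absurdity $(1,1)=(0,0)$. The reduction to this case and the verification of the two translation preimages are routine ordinal-arithmetic computations.
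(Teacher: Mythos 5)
Your proof is correct and takes essentially the same route as the paper: the paper isolates $(0,0)$ by observing that $(1,1)\mathcal{B}_{\alpha}=\{(c,d):c\geq 1\}$ and $\mathcal{B}_{\alpha}(1,1)=\{(c,d):d\geq 1\}$ are retracts of the space (via the translations by the idempotent $(1,1)$) and hence closed, which is exactly your dense-agreement/equalizer argument in contracted form. The transfer to $(0,a)$ and $(a,0)$ is likewise the same: the paper's neighborhood $V((0,a))$ with $V((0,a))(a,0)=\{(0,0)\}$ is just your preimage $\rho^{-1}(\{(0,0)\})$ phrased in neighborhood language, using the identical ordinal-arithmetic uniqueness facts.
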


\begin{proof}
Observe that if $e$ is an idempotent of semitopological semigroup $S$ both $eS$ and $Se$ are retracts of the space $S$ and hence are closed subsets of $S$.
Since $\{(0,0)\}= \mathcal{B_{\alpha}}\setminus((1,1)\mathcal{B_{\alpha}}\cup \mathcal{B_{\alpha}}(1,1))$, $(0,0)$ is an isolated point in $(\mathcal{B_{\alpha}},\tau)$.

Since $(0,a)(a,0)= (0,0)$, the separate continuity of multiplication implies that there exists an open neighborhood  $V((0,a))$ such that $V((0,a))(a,0)=\{(0,0)\}$. Fix any point $(c,d)\in V((0,a))$. Then $(c,d)(a,0)= (0,0)$. Hence $d=a$ and $c=0$ which implies that $V((0,a))=\{(0,a)\}$. Thus $(0,a)$ is an isolated point in $(\mathcal{B_{\alpha}},\tau)$. The proof of the statement that $(a,0)$ is an isolated point is similar.
\end{proof}

\begin{lemma}\label{l3} Let $(\mathcal{B_{\alpha}},\tau)$ be a semitopological semigroup. Then for every element $(a,b)\in \mathcal{B_{\alpha}}$ there exists a clopen neighborhood $V((a,b))$ of $(a,b)$ such that the following conditions hold:
\begin{itemize}
  \item[$1)$] for every $(c,d)\in V((a,b))$ $c\leq a$ and $d\leq b$;
  \item[$2)$] for every $(c,d)\in V((a,b))$ $a=c$ if and only if $b=d$.
\end{itemize}
\end{lemma}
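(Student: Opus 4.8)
The plan is to exploit that the singletons $\{(a,0)\}$ are clopen and that right translations are continuous in a semitopological semigroup, so that a single preimage of $\{(a,0)\}$ simultaneously controls both coordinates of all nearby points. Thus one clopen set will verify conditions $1)$ and $2)$ at once, and no further intersection is needed.

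First I would compute $(a,b)(b,0)$. Since the middle indices satisfy $b\leq b$, the defining formula gives $(a,b)(b,0)=(a+(b-b),0)=(a,0)$. By Lemma~\ref{l2} the point $(a,0)$ is isolated, so $\{(a,0)\}$ is open; and since the space is Hausdorff it is also closed. Because $\tau$ is a semitopological semigroup topology, the right translation $\rho\colon x\mapsto x(b,0)$ is continuous, so I would set
\[
V((a,b)):=\rho^{-1}(\{(a,0)\})=\{(c,d)\in\mathcal{B_{\alpha}}:(c,d)(b,0)=(a,0)\}.
\]
This set is clopen, being the preimage of a clopen set under a continuous map, and it contains $(a,b)$ by the computation above.

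Next I would determine $V((a,b))$ explicitly through the case split in the multiplication rule. For $(c,d)(b,0)$ there are two cases. If $d>b$, then $(c,d)(b,0)=(c,d-b)$ with $d-b>0$, which can never equal $(a,0)$; hence this case is vacuous on $V((a,b))$. If $d\leq b$, then $(c,d)(b,0)=(c+(b-d),0)$, and the equation $(c+(b-d),0)=(a,0)$ is equivalent to $c+(b-d)=a$. Therefore every $(c,d)\in V((a,b))$ satisfies both $d\leq b$ and $c+(b-d)=a$. Condition $1)$ now follows at once: $d\leq b$ is already recorded, and from $c+(b-d)=a$ together with $b-d\geq 0$ we obtain $c\leq c+(b-d)=a$. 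For condition $2)$ I would read off the equivalence from the same identity: if $d=b$ then $b-d=0$, whence $c=a$; conversely, if $d<b$ then $b-d>0$, so $a=c+(b-d)>c$ and thus $c\neq a$. Combined with $d\leq b$, this yields $c=a$ if and only if $d=b$ on $V((a,b))$.

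I do not anticipate a genuine obstacle here, since the argument is driven entirely by the isolatedness supplied by Lemma~\ref{l2} and by separate continuity. The only point requiring care is the ordinal arithmetic: one must use that the difference $b-d$ is defined precisely when $d\leq b$, and that ordinal addition is strictly increasing in its right argument, so that $c+(b-d)=a$ forces $c\leq a$ with equality exactly when $b-d=0$, i.e.\ when $d=b$. A symmetric argument using left translation by $(0,a)$ together with the isolatedness of $(0,b)$ would also work, but right translation by $(b,0)$ alone already delivers both conditions.
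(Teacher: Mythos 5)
Your proof is correct and takes essentially the same approach as the paper, just in mirror image: the paper sets $V((a,b))=\{x:(0,a)x=(0,b)\}$, pulling back the clopen singleton $\{(0,b)\}$ under left translation by $(0,a)$, whereas you pull back $\{(a,0)\}$ under right translation by $(b,0)$; both arguments rest on Lemma~\ref{l2} together with separate continuity, and the ordinal-arithmetic case analysis is dual ($d+(a-c)=b$ in the paper versus $c+(b-d)=a$ in yours). Your write-up is in fact slightly more explicit, since the paper leaves the implication from $b=d$ to $a=c$ in condition $2)$ implicit, while you derive both directions from strict monotonicity of ordinal addition in the right argument.
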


\begin{proof}
Put $V((a,b))=\{x\in S: (0,a)x=(0,b)\}$. Observe that, by Lemma \ref{l2}, $(0,b)$ is an isolated point of the space $\mathcal{B_{\alpha}}$. Since $(0,a)(a,b)=(0,b)$, the separate continuity of the multiplication in $\mathcal{B_{\alpha}}$ implies that $V(a,b)$ is a clopen neighborhood of the point $(a,b)$. Fix any element $(c,d)\in V((a,b))$. Then $(0,a)(c,d)=(0,b)$. Clearly, the above equality holds in the only case when $c\leq a$. Hence $(0,a)(c,d)=(0,d+(a-c))=(0,b)$ and since $d+(a-c)=b$ we get that $d\leq b$. Moreover, if $a=c$ then $(0,a)(c,d)=(0,d)=(0,b)$ and hence $b=d$.
\end{proof}

Lemma \ref{l3} implies the following corollary:
\begin{corollary}\label{c1} Let $(\mathcal{B_{\alpha}},\tau)$ be a semitopological semigroup. Then for every finite ordinals $n,m$ the element $(n,m)$ is an isolated point in the space $(\mathcal{B_{\alpha}},\tau)$.
\end{corollary}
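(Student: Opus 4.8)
The plan is to derive the corollary directly from Lemma~\ref{l3} by exploiting the finiteness of $n$ and $m$. First I would apply Lemma~\ref{l3} to the point $(n,m)$ to obtain a clopen neighborhood $V((n,m))$ satisfying conditions $1)$ and $2)$. Condition $1)$ tells us that every $(c,d)\in V((n,m))$ satisfies $c\leq n$ and $d\leq m$. Since $n$ and $m$ are finite ordinals, the set of all pairs $(c,d)$ with $c\leq n$ and $d\leq m$ coincides with the finite set $\{0,1,\dots,n\}\times\{0,1,\dots,m\}$, and therefore $V((n,m))$ is finite.

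It then remains to upgrade the statement ``$(n,m)$ lies in a finite open set'' to ``$(n,m)$ is isolated''. Here I would invoke the standing Hausdorff (in particular $T_1$) assumption: for each of the finitely many points $(c,d)\in V((n,m))\setminus\{(n,m)\}$ there is an open set containing $(n,m)$ but not $(c,d)$. Intersecting these finitely many open sets with the open set $V((n,m))$ produces an open set equal to $\{(n,m)\}$, which shows that $(n,m)$ is an isolated point of $(\mathcal{B_{\alpha}},\tau)$. If one prefers, condition $2)$ of Lemma~\ref{l3} can be used to note that every point of $V((n,m))$ other than $(n,m)$ satisfies $c<n$ and $d<m$, but this is not needed for the argument.

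The argument has no genuine obstacle; the single point to watch is that the reduction to a finite neighborhood relies essentially on the finiteness of $n$ and $m$ through condition $1)$ of Lemma~\ref{l3}. For infinite ordinals the set $\{(c,d):c\leq n,\ d\leq m\}$ need not be finite, which is precisely why this corollary is restricted to finite ordinals and why a more delicate analysis is required to handle limit ordinals.
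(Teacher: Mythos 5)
Your proof is correct and follows exactly the route the paper intends: the paper states the corollary as an immediate consequence of Lemma~\ref{l3}, and your argument simply spells out the two implicit steps (condition~$1)$ forces $V((n,m))\subseteq\{0,\dots,n\}\times\{0,\dots,m\}$, hence finite, and the $T_1$ separation then shrinks this finite open neighborhood to the singleton $\{(n,m)\}$). Your closing remark about why finiteness of $n,m$ is essential also matches the paper's subsequent treatment of limit ordinals by other means.
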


\begin{lemma}\label{l4} Let $(\mathcal{B_{\alpha}},\tau)$ is a semitopological semigroup. For every distinct ordinals $a,b<\alpha$  $(\omega^{a},\omega^{b})$ is an isolated point in $(\mathcal{B_{\alpha}},\tau)$.
\end{lemma}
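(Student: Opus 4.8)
The plan is to intersect the clopen neighborhood produced by Lemma~\ref{l3} with a second neighborhood coming from separate continuity, and then exploit the fact that $\omega^a$ and $\omega^b$ are prime ordinals to collapse the intersection to the single point $(\omega^a,\omega^b)$.

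First I would fix the clopen neighborhood $V=V((\omega^a,\omega^b))$ granted by Lemma~\ref{l3}, so that every $(c,d)\in V$ satisfies $c\le\omega^a$, $d\le\omega^b$, and $c=\omega^a$ exactly when $d=\omega^b$. Next I would compute the product $(\omega^a,\omega^b)(\omega^b,0)$: the two relevant middle coordinates coincide, so this equals $(\omega^a,0)$, which is an isolated point by Lemma~\ref{l2}. Separate continuity of the right translation $x\mapsto x(\omega^b,0)$ then yields an open neighborhood $W$ of $(\omega^a,\omega^b)$ with $W(\omega^b,0)=\{(\omega^a,0)\}$. The candidate isolating neighborhood is $V\cap W$.

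The core of the argument is to show $V\cap W=\{(\omega^a,\omega^b)\}$. For $(c,d)\in V\cap W$ we have $d\le\omega^b$, hence $(c,d)(\omega^b,0)=(c+(\omega^b-d),0)$, and this must equal $(\omega^a,0)$; thus $c+(\omega^b-d)=\omega^a$. If $d=\omega^b$ then $c=\omega^a$ by the second condition of Lemma~\ref{l3}, giving the point itself, so the task reduces to ruling out $d<\omega^b$. In that case $c<\omega^a$ (again by the second condition of Lemma~\ref{l3}), and primeness of $\omega^b$ forces $\omega^b-d=\omega^b$: otherwise $\omega^b=d+(\omega^b-d)$ would exhibit $\omega^b$ as a sum of two ordinals contained in it. Consequently $c+\omega^b=\omega^a$.

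It then remains to derive a contradiction from $c+\omega^b=\omega^a$ under the hypothesis $a\ne b$, and this is where that hypothesis and the indecomposability of the relevant power of $\omega$ do the real work. If $a<b$, then $\omega^a<\omega^b\le c+\omega^b=\omega^a$, which is impossible. If $a>b$, then $c<\omega^a$ and $\omega^b<\omega^a$, so $c+\omega^b$ is a sum of two ordinals both contained in $\omega^a$ and hence, by primeness of $\omega^a$, strictly smaller than $\omega^a$, contradicting $c+\omega^b=\omega^a$. I expect this ordinal-arithmetic step---selecting the translation $(\omega^b,0)$ that lands the point on an isolated element and then invoking primeness of $\omega^a$ and $\omega^b$---to be the only delicate point; everything else is a routine combination of Lemmas~\ref{l2} and~\ref{l3} with separate continuity. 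Therefore $V\cap W$ is an open set equal to $\{(\omega^a,\omega^b)\}$, which shows that $(\omega^a,\omega^b)$ is isolated.
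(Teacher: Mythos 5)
Your proof is correct and follows essentially the same route as the paper: combine Lemma~\ref{l2}, Lemma~\ref{l3} and separate continuity to trap a neighborhood whose elements must satisfy an ordinal equation, then kill that equation using the additive indecomposability (primeness) of powers of $\omega$. The only difference is cosmetic: the paper translates on the left by $(0,\omega^{a})$ in the case $a<b$ and declares the case $a>b$ ``similar,'' whereas you use the single right translation by $(\omega^{b},0)$ uniformly and split into $a<b$ and $a>b$ only in the final arithmetic step, which is precisely the paper's omitted symmetric case.
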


\begin{proof}
First we consider the case when $a<b$. Since $(0,\omega^{a})(\omega^{a},\omega^{b})=(0,\omega^{b})$, Lemma \ref{l2} and separate continuity of the semigroup operation in $(\mathcal{B_{\alpha}},\tau)$ imply that there exists an open neighborhood $V((\omega^{a},\omega^{b}))$ of $(\omega^{a},\omega^{b})$ in $(\mathcal{B_{\alpha}},\tau)$ such that $(0,\omega^{a})V((\omega^{a},\omega^{b}))=\{(0,\omega^{b})\}$ and for the neighborhood $V((\omega^{a},\omega^{b}))$ the conditions of Lemma \ref{l3} hold. Then $(0,\omega^{a})(c,d)=(0,d+(\omega^{a}-c))=(0,\omega^{b})$ for every $(c,d)\in V((a,b))$. Hence $d+(\omega^{a}-c)=d+\omega^{a}=\omega^{b}$, but this equation is true only if $\omega^{a}=\omega^{b}$, a contradiction.

In the case when $a>b$ the proof is similar.
\end{proof}

By \cite[Theorem 17]{Weiss} each ordinal $\alpha$ has Cantor's normal form, that is $\alpha=n_{1}\omega^{\beta_{1}}+n_{2}\omega^{\beta_{2}}+...+n_{k}\omega^{\beta_{k}}$, where $n_{i}$ are positive integers and $\beta_{1},\beta_{2},...,\beta_{3}$ is a decreasing sequence of ordinals.

\begin{lemma}\label{l5} Let $(\mathcal{B_{\alpha}},\tau)$ be a semitopological semigroup, $a=n_{1}\omega^{\beta_{1}}+n_{2}\omega^{\beta_{2}}+...+n_{k}\omega^{\beta_{k}}$, $b=m_{1}\omega^{\gamma_{1}}+m_{2}\omega^{\gamma_{2}}+...+m_{t}\omega^{\gamma_{t}}$ be Cantor's normal forms of the ordinals $a$ and $b$ respectively. If $(a,b)\in \mathcal{B_{\alpha}}$ and $(\omega^{\beta_{k}},\omega^{\gamma_{t}})$ is an isolated point in $(\mathcal{B_{\alpha}},\tau)$ then $(a,b)$ is an isolated point in the space $(\mathcal{B_{\alpha}},\tau)$.
\end{lemma}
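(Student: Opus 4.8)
The plan is to transfer the isolatedness of $(\omega^{\beta_{k}},\omega^{\gamma_{t}})$ to $(a,b)$ by pulling it back along a separately continuous self-map of $\mathcal{B_{\alpha}}$ that sends $(a,b)$ to $(\omega^{\beta_{k}},\omega^{\gamma_{t}})$ and has no other preimage of that point. Using the Cantor normal forms, split off the last summands: write $a=a_{0}+\omega^{\beta_{k}}$ and $b=b_{0}+\omega^{\gamma_{t}}$, where $a_{0}=n_{1}\omega^{\beta_{1}}+\dots+(n_{k}-1)\omega^{\beta_{k}}$ and $b_{0}=m_{1}\omega^{\gamma_{1}}+\dots+(m_{t}-1)\omega^{\gamma_{t}}$; since $\omega^{\beta_{k}},\omega^{\gamma_{t}}\geq 1$ we have $a_{0}<a$ and $b_{0}<b$. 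Define $g\colon\mathcal{B_{\alpha}}\to\mathcal{B_{\alpha}}$ by $g(x)=(0,a_{0})\,x\,(b_{0},0)$. As $(\mathcal{B_{\alpha}},\tau)$ is semitopological, left translation by $(0,a_{0})$ and right translation by $(b_{0},0)$ are continuous, hence so is $g$. A short computation with the multiplication rule gives $(0,a_{0})(a,b)=(a-a_{0},b)=(\omega^{\beta_{k}},b)$ and $(\omega^{\beta_{k}},b)(b_{0},0)=(\omega^{\beta_{k}},b-b_{0})=(\omega^{\beta_{k}},\omega^{\gamma_{t}})$, so $g((a,b))=(\omega^{\beta_{k}},\omega^{\gamma_{t}})$. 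Since this point is isolated, $\{(\omega^{\beta_{k}},\omega^{\gamma_{t}})\}$ is open and therefore $g^{-1}(\{(\omega^{\beta_{k}},\omega^{\gamma_{t}})\})$ is an open neighbourhood of $(a,b)$.

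The crux is to verify that this preimage is exactly $\{(a,b)\}$, which is a finite case analysis on the multiplication. Fix $(c,d)$ with $g((c,d))=(\omega^{\beta_{k}},\omega^{\gamma_{t}})$ and first evaluate $(0,a_{0})(c,d)$. If $c<a_{0}$, then $(0,a_{0})(c,d)=(0,d+(a_{0}-c))$ has first coordinate $0$; after right multiplication by $(b_{0},0)$ the result is either $(0,\ast)$ or $(\ast,0)$, and since $\omega^{\beta_{k}}\geq 1$ and $\omega^{\gamma_{t}}\geq 1$ neither can equal $(\omega^{\beta_{k}},\omega^{\gamma_{t}})$; thus $c\geq a_{0}$ and $(0,a_{0})(c,d)=(c-a_{0},d)$. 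Now evaluate $(c-a_{0},d)(b_{0},0)$: if $d\leq b_{0}$ its second coordinate is $0\neq\omega^{\gamma_{t}}$, which is impossible, so $d>b_{0}$ and $g((c,d))=(c-a_{0},d-b_{0})$. Equating with $(\omega^{\beta_{k}},\omega^{\gamma_{t}})$ gives $c-a_{0}=\omega^{\beta_{k}}$ and $d-b_{0}=\omega^{\gamma_{t}}$, whence $c=a_{0}+\omega^{\beta_{k}}=a$ and $d=b_{0}+\omega^{\gamma_{t}}=b$. Consequently $g^{-1}(\{(\omega^{\beta_{k}},\omega^{\gamma_{t}})\})=\{(a,b)\}$ is open, so $(a,b)$ is isolated.

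I expect the only genuine obstacle to be the bookkeeping of the multiplication sub-cases rather than any conceptual point; the two facts that drive the argument are that $\omega^{\beta_{k}}$ and $\omega^{\gamma_{t}}$ are nonzero (eliminating every case in which a coordinate collapses to $0$) and the injectivity of ordinal addition in its right argument (so that $c-a_{0}=\omega^{\beta_{k}}$ forces $c=a$, and similarly for $d$). If one wishes to trim the analysis, one may intersect $g^{-1}(\{(\omega^{\beta_{k}},\omega^{\gamma_{t}})\})$ with the clopen neighbourhood $V((a,b))$ supplied by Lemma~\ref{l3}, thereby assuming at the outset that $c\leq a$ and $d\leq b$; this removes the cases with $c$ or $d$ too large and leaves only the verification that the surviving case pins down $c=a$ and $d=b$.
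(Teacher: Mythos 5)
Your proposal is correct and is essentially the paper's own argument: the paper uses the same decomposition $a^{*}=a_{0}$, $b^{*}=b_{0}$ and the same sandwiching map $x\mapsto(0,a^{*})\,x\,(b^{*},0)$, obtaining via separate continuity a neighbourhood $V((a,b))$ with $(0,a^{*})V((a,b))(b^{*},0)=\{(\omega^{\beta_{k}},\omega^{\gamma_{t}})\}$ and then concluding $c=a$, $d=b$ exactly as you do. Your only difference is presentational (phrasing it as the preimage of an open singleton and spelling out the case analysis the paper dismisses as ``obvious''), which if anything makes the argument more complete.
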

\begin{proof}
Suppose that $(\omega^{\beta_{k}},\omega^{\gamma_{t}})$ is an isolated point in $(\mathcal{B_{\alpha}},\tau)$.
Put $a^{*}=n_{1}\omega^{\beta_{1}}+n_{2}\omega^{\beta_{2}}+...+(n_{k}-1)\omega^{\beta_{k}}$ and $b^{*}=m_{1}\omega^{\gamma_{1}}+n_{2}\omega^{\gamma_{2}}+...+(n_{t}-1)\omega^{\gamma_{t}}$. Then $(0,a^{*})(a,b)(b^{*},0)= (\omega^{\beta_{k}},\omega^{\gamma_{t}})$. The separate continuity of the multiplication in $(\mathcal{B_{\alpha}},\tau)$ implies that there exists an open neighborhood $V((a,b))$ of $(a,b)$ such that $(0,a^{*})V((a,b))(b^{*},0)= \{(\omega^{\beta_{k}},\omega^{\gamma_{t}})\}$. Fix any element $(c,d)\in V((a,b))$. Obviously, $(0,a^{*})(c,d)(b^{*},0)$ can be equal to $(\omega^{\beta_{k}},\omega^{\gamma_{t}})$ only if $a^{*}\leq c$ and $b^{*}\leq d$. Then $(0,a^{*})(c,d)(b^{*},0)=(c-a^{*},d-b^{*})=(\omega^{\beta_{k}},\omega^{\gamma_{t}})$, which implies that $c=a$ and $d=b$. Hence $V((a,b))=\{(a,b)\}$.
\end{proof}

\begin{proposition}\label{t1} Let $(\mathcal{B_{\alpha}},\tau)$ be a semitopological semigroup and $a$ be a non-limit ordinal. Then for every ordinal $b\in \omega^{\alpha}$ both $(a,b)$ and $(b,a)$ are isolated points in the space $(\mathcal{B_{\alpha}},\tau)$.
\end{proposition}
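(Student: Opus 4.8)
The plan is to reduce everything to the machinery already assembled, above all to Lemma \ref{l5}, whose content is that the isolatedness of $(a,b)$ is controlled by the isolatedness of the pair of final Cantor terms $(\omega^{\beta_k},\omega^{\gamma_t})$. The decisive algebraic observation is that a \emph{non-limit} ordinal $a$ has, if it is nonzero, a Cantor normal form whose last exponent is $\beta_k=0$, so that $\omega^{\beta_k}=\omega^0=1$; this is precisely the mechanism that lets us land on an already-treated base case.

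First I would dispose of the degenerate cases in which one of the coordinates vanishes. If $a=0$ then both $(a,b)=(0,b)$ and $(b,a)=(b,0)$ are isolated directly by Lemma \ref{l2}; and if $a\neq 0$ but $b=0$, then $(a,b)=(a,0)$ and $(b,a)=(0,a)$ are again isolated by Lemma \ref{l2}. So it remains to treat the principal case in which $a$ is a successor ordinal and $b$ is a nonzero ordinal, and here both $a$ and $b$ possess genuine Cantor normal forms.

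Write $a=n_1\omega^{\beta_1}+\cdots+n_k\omega^{\beta_k}$ and $b=m_1\omega^{\gamma_1}+\cdots+m_t\omega^{\gamma_t}$. Since $a$ is a successor ordinal its last exponent is $\beta_k=0$, so $\omega^{\beta_k}=1$; and since $b<\omega^{\alpha}$ every exponent of $b$, in particular $\gamma_t$, satisfies $\gamma_t<\alpha$. To apply Lemma \ref{l5} to $(a,b)$ I must check that $(\omega^{\beta_k},\omega^{\gamma_t})=(1,\omega^{\gamma_t})$ is isolated. If $\gamma_t=0$ this is the point $(1,1)$, which is isolated by Corollary \ref{c1}; if $\gamma_t\neq 0$ then, $0$ and $\gamma_t$ being distinct ordinals below $\alpha$, the point $(\omega^0,\omega^{\gamma_t})$ is isolated by Lemma \ref{l4}. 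Either way Lemma \ref{l5} yields that $(a,b)$ is isolated. For $(b,a)$ I would run the identical argument with the roles of the coordinates interchanged: now it is the second coordinate $a$ that contributes the final term $\omega^0=1$, so Lemma \ref{l5} reduces isolatedness of $(b,a)$ to that of $(\omega^{\gamma_t},1)$, which is handled exactly as before by Corollary \ref{c1} or Lemma \ref{l4}.

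The proof is essentially bookkeeping once the key reduction is seen, so I do not expect a genuine obstacle; the only points requiring care are the separate treatment of the cases $a=0$ and $b=0$, where Lemma \ref{l5} does not literally apply because a Cantor normal form requires a nonzero ordinal, and the sub-case $\gamma_t=0$, where one must invoke Corollary \ref{c1} rather than Lemma \ref{l4}.
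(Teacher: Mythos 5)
Your proof is correct, but it takes a genuinely different route from the paper in the crucial case. The paper, like you, begins by combining Corollary \ref{c1} with Lemma \ref{l5} to dispose of the case where $b$ is a non-limit ordinal (your sub-case $\gamma_t=0$, plus the zero cases). But for $b$ a \emph{limit} ordinal the paper does not touch Lemma \ref{l4} at all: it runs a fresh separate-continuity argument. Namely, assuming $(a,b)$ is not isolated, from $(a,b)(b,0)=(a,0)$ and Lemmas \ref{l2}, \ref{l3} it extracts a neighborhood $V((a,b))$ with $V((a,b))(b,0)=\{(a,0)\}$; for any $(c,d)\in V((a,b))\setminus\{(a,b)\}$ Lemma \ref{l3} forces $d<b$, so $(c,d)(b,0)=(c+(b-d),0)$ with $b-d$ a limit ordinal, making $c+(b-d)$ a limit ordinal, contradicting that $a$ is non-limit. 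Your argument replaces this computation by the observation that a successor $a$ contributes the last Cantor term $\omega^0=1$ while a limit $b$ (or any $b$ with $\gamma_t\neq 0$) contributes $\omega^{\gamma_t}$ with $\gamma_t\neq 0$ and $\gamma_t<\alpha$, so that $(\omega^0,\omega^{\gamma_t})$ is isolated by Lemma \ref{l4} and Lemma \ref{l5} finishes. This is a legitimate and arguably more economical reduction: it reuses the already-proven lemmas instead of repeating a continuity argument, and it also makes explicit the degenerate cases $a=0$ and $b=0$ (where Cantor normal forms do not exist and Lemma \ref{l2} must be invoked), which the paper passes over silently. What the paper's direct argument buys in exchange is independence from Lemma \ref{l4}: its treatment of the limit case relies only on Lemmas \ref{l2} and \ref{l3}. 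Both proofs are sound; yours is the tighter use of the paper's own machinery.
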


\begin{proof}
Corollary \ref{c1} and Lemma \ref{l5} imply that both points $(a,b)$ and $(b,a)$ are isolated in $(\mathcal{B_{\alpha}},\tau)$ provided that $b$ is a non-limit ordinal. Hence it is sufficient to consider the case when $b$ is a limit ordinal. Suppose to the contrary that $(a,b)$ is a non-isolated point in $(\mathcal{B_{\alpha}},\tau)$. Since $(a,b)(b,0)=(a,0)$, the separate continuity of multiplication in $(\mathcal{B_{\alpha}},\tau)$, Lemmas \ref{l2} and \ref{l3} imply that there exists an open neighborhood $V((a,b))$ of $(a,b)$ satisfying the conditions of Lemma \ref{l3} such that $V((a,b))(b,0)=\{(a,0)\}$. Fix an arbitrary element $(c,d)\in V(a,b)\setminus\{(a,b)\}$. Then $(c,d)(b,0)=(c+(b-d),0)=(a,0)$. Hence $a=c+(b-d)$, but since $b$ is a limit ordinal, $b-d$ is also a limit ordinal. Then $c+(b-d)$ is a limit ordinal which contradicts to the assumption that $a$ is a non-limit ordinal. The proof of the statement that $(b,a)$ is an isolated point in $(\mathcal{B_{\alpha}},\tau)$ is similar.
\end{proof}

\begin{theorem}\label{t2} For each $\alpha<\omega+1$ every locally compact topological $\alpha$-bicyclic semigroup $(\mathcal{B_{\alpha}},\tau)$ is discrete.
\end{theorem}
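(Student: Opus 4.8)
The plan is to reduce Theorem~\ref{t2} to the isolation of the ``diagonal'' idempotents $(\omega^{r},\omega^{r})$ and then to handle these via the bicyclic machinery recalled in the introduction. First I would observe that since $\alpha<\omega+1$, every ordinal below $\omega^{\alpha}$ has a Cantor normal form all of whose exponents are finite; hence in the reduction furnished by Lemma~\ref{l5} the relevant exponents $\beta_{k},\gamma_{t}$ are finite. Combining Proposition~\ref{t1} (which disposes of all $(a,b)$ having a non-limit coordinate), Lemma~\ref{l4} (which disposes of $(\omega^{c},\omega^{d})$ with $c\neq d$) and Lemma~\ref{l5}, the only points that can fail to be isolated are the idempotents $e_{r}:=(\omega^{r},\omega^{r})$ with $1\le r<\alpha$ and $r$ finite. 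So it suffices to prove, by induction on the finite ordinal $r$, that each $e_{r}$ is isolated; the base of the induction, $(\omega^{0},\omega^{0})=(1,1)$, is already covered by Corollary~\ref{c1}.

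Second---and this is the computational heart---I would identify the clopen neighbourhood of $e_{r}$ produced by Lemma~\ref{l3}. Taking $V(e_{r})=\{x:(0,\omega^{r})x=(0,\omega^{r})\}$, a direct computation gives $V(e_{r})=\{(c,d):c,d<\omega^{r}\}\cup\{e_{r}\}$. Because $\omega^{r}$ is additively indecomposable (sums of ordinals below $\omega^{r}$ stay below $\omega^{r}$), the set $\{(c,d):c,d<\omega^{r}\}$ is closed under the operation and is literally a copy of $\mathcal{B}_{r}$; moreover $e_{r}x=xe_{r}=e_{r}$ for every $x\in V(e_{r})$, so $e_{r}$ acts as an adjoined zero. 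Thus $V(e_{r})$ is a clopen subsemigroup isomorphic to $\mathcal{B}_{r}^{0}$ (the $r$-bicyclic monoid with adjoined zero), and since $V(e_{r})$ is open, $e_{r}$ is isolated in $\mathcal{B}_{\alpha}$ iff it is isolated in $V(e_{r})$. Being clopen in a locally compact space, $V(e_{r})$ is itself a locally compact topological semigroup, so the problem becomes: the adjoined zero of $\mathcal{B}_{r}^{0}$ is isolated.

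For the base case $r=1$ this is exactly a bicyclic-with-zero, and I would invoke the cited dichotomy of~\cite{Gut-2015} (a Hausdorff locally compact semitopological $\mathscr{B}(p,q)^{0}$ is compact or discrete) together with the classical fact (Andersen's theorem, \cite{Andersen-1952}, and \cite{Eberhart-Selden-1969}) that the bicyclic semigroup is not embeddable in a compact topological semigroup: the compact alternative is excluded, so $\mathcal{B}_{1}^{0}$ is discrete and $e_{1}$ is isolated. For the inductive step $r\ge2$ I would use the predecessor $\omega^{r-1}$: the set $\{(m\omega^{r-1},n\omega^{r-1}):m,n<\omega\}$ is a copy of the bicyclic monoid inside $\mathcal{B}_{r}$ (the map $(m,n)\mapsto(m\omega^{r-1},n\omega^{r-1})$ is an embedding, by the same indecomposability computation), again with $e_{r}$ as its zero, while the induction hypothesis makes $e_{r-1}=(\omega^{r-1},\omega^{r-1})$ an isolated point; these two facts let me transport the $r=1$ argument to the chain of idempotents $(n\omega^{r-1},n\omega^{r-1})$ accumulating at $e_{r}$.

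The hard part, and the place where I expect to spend real effort, is to upgrade ``no accumulation along the bicyclic grid'' to ``no accumulation at all'': a priori $e_{r}$ could be approached by points $(c,d)$ whose coordinates are not multiples of $\omega^{r-1}$, i.e.\ from outside any single bicyclic copy. Controlling these requires a genuine extension of the $r=1$ dichotomy to $\mathcal{B}_{r}^{0}$, and every route I see leans on the existence of the immediate predecessor $\omega^{r-1}$ (to shift, to retract onto the grid, and to invoke the inductive isolation of $e_{r-1}$). This dependence is exactly what I would expect to break down when $r=\omega$: there is no $\omega^{r-1}$, the ``approach'' idempotents $(\omega^{n},\omega^{n})$ no longer lie on one bicyclic orbit, and the dichotomy genuinely fails---which is consistent with, and explains, the non-discrete locally compact example on $\mathcal{B}_{\omega+1}$ announced in the abstract. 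I would therefore treat the finiteness of $r$ not as a technical convenience but as the essential hypothesis, and present the obstacle above as the crux of the proof.
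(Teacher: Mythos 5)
Your reduction reproduces the paper's exactly: Proposition~\ref{t1}, Lemmas~\ref{l4} and~\ref{l5} and Corollary~\ref{c1} leave only the idempotents $e_{r}=(\omega^{r},\omega^{r})$ with $r$ finite to be shown isolated; your identification of the clopen set $V(e_{r})=\{(c,d):c,d<\omega^{r}\}\cup\{e_{r}\}$ as a copy of $\mathcal{B}_{r}$ with $e_{r}$ acting as an adjoined zero is precisely what the paper extracts from Lemma~\ref{l3}; and your base case $r=1$ (the dichotomy of \cite{Gut-2015}, with the compact alternative excluded for topological semigroups) is the paper's second paragraph. But from there on you do not give a proof: for $2\le r<\omega$ you describe a hoped-for transport of the $r=1$ case along the grid $\{(i\omega^{r-1},j\omega^{r-1}):i,j<\omega\}$, and then state explicitly that controlling approach to $e_{r}$ by points \emph{off} that grid ``requires a genuine extension of the $r=1$ dichotomy'' which you do not supply. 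That is a genuine gap, not a presentational one: what you have actually established is the theorem for $\alpha\le 2$ only, and there is no citable extension of the dichotomy to $\mathcal{B}_{r}$ with adjoined zero --- this step has to be argued by hand, and it is where the entire content of the theorem lies.

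For concreteness, here is how the paper closes exactly that gap. Suppose some $e_{m}$ is non-isolated with $m$ minimal (minimality plays the role of your induction hypothesis, but gives more: \emph{all} of $\mathcal{B}_{m}=\{(a,b):a,b<\omega^{m}\}$ is discrete, not merely the grid). Take a compact clopen $V\ni e_{m}$ inside $\mathcal{B}_{m}\cup\{e_{m}\}$ with $(0,0)\notin V^{2}$; then $e_{m}$ is the unique non-isolated point of $V$, so every infinite set of distinct points of $V$ clusters at $e_{m}$. For $(y,0)\in\mathcal{B}_{m}$ put $X_{(y,0)}=\{(0,x):(y,x)\in V\}$ and split into two cases. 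If some $X_{(y,0)}=\{(0,x_{k}):k\in\mathbb{N}\}$ is infinite, left translation by $(z,y)$ gives $(z,x_{k})\to e_{m}$ for every $z<\omega^{m}$; taking for each $k$ the least $c_{k}$ with $(x_{k},c_{k})\in V$ (nonzero for large $k$ since $(0,0)\notin V^{2}$) and a compact open $O\ni e_{m}$ with $O(1,0)\cup O(\omega,0)\cup\dots\cup O(\omega^{m-1},0)\subseteq V$, one sees that no $(x_{k},c_{k})$ can lie in $O$: right-multiplying by $(\omega^{j},0)$, $j$ the leading exponent of $c_{k}$, would produce a point of $V$ with first coordinate $x_{k}$ and strictly smaller second coordinate. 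So $\{(x_{k},c_{k})\}$ is an infinite closed discrete subset of the compact set $V\setminus O$ --- a contradiction. If instead every $X_{(y,0)}$ is finite, infinitely many are nonempty; choosing $z_{y_{n}}$ greatest with $(y_{n},z_{y_{n}})\in V$ gives $(y_{n},z_{y_{n}})\to e_{m}$, yet right translation by $(0,\omega^{m-1})$ sends this sequence to $(y_{n},\omega^{m-1}+z_{y_{n}})$, which by maximality of $z_{y_{n}}$ lies outside $V$, while $e_{m}(0,\omega^{m-1})=e_{m}$ --- contradicting continuity. Note that the only resources used are translations by $(\omega^{j},0)$, $j\le m-1$, and by $(0,\omega^{m-1})$, i.e.\ exactly the finiteness of $m$ you correctly identified as essential (and which fails at $r=\omega$, consistently with Example~\ref{e1}); the idea you were missing is the pair of extremal choices (least $c_{k}$, greatest $z_{y_{n}}$) combined with compactness of $V$, which handles arbitrary points of $V$ rather than only grid points.
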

\begin{proof}
Lemmas \ref{l4} and \ref{l5} imply that if each idempotent $(\omega^{a},\omega^{a})$ of the $(\mathcal{B_{\alpha}},\tau)$ is an isolated point then $\tau$ is discrete.

It is obvious that the subset $\{(n,m):n,m<\omega\}\cup \{\omega,\omega\}$ with the semigroup operation induced from $\mathcal{B}_{\alpha}$ is isomorphic to the bicyclic semigroup with adjoined zero. Then by Lemma \ref{l3} and \cite[Corollary 1]{Gut-2015}, $(\omega,\omega)$ is an isolated point in $(\mathcal{B_{\alpha}},\tau)$.

Suppose $(\mathcal{B_{\alpha}},\tau)$ is a non-discrete semigroup. Let $m$ be the smallest positive integer such that $(\omega^{m},\omega^{m})$ is a non isolated idempotent of $(\mathcal{B_{\alpha}},\tau)$.
We remark that by our assumption Lemmas \ref{l4} and \ref{l5} imply that $\{(a,b): a,b<\omega^{m}\}$ is a discrete subsemigroup of $(\mathcal{B_{\alpha}},\tau)$ which is algebraically isomorphic to $\mathcal{B}_{m}$. By Lemma \ref{l3} there exists a clopen compact neighborhood $W((\omega^{m},\omega^{m}))$ of $(\omega^{m},\omega^{m})$ such that $W((\omega^{m},\omega^{m}))\subset \mathcal{B}_{m}\cup\{(\omega^{m},\omega^{m})\}$. The continuity of the multiplication in $(\mathcal{B_{\alpha}},\tau)$ implies that there exists a compact open neighborhood $V((\omega^{m},\omega^{m}))\subseteq W((\omega^{m},\omega^{m}))$ of $(\omega^{m},\omega^{m})$ such that $(0,0)\notin V^{2}((\omega^{m},\omega^{m}))$. Since $V((\omega^{m},\omega^{m}))$ is compact and $(\omega^{m},\omega^{m})$ is the only non-isolated point in $V((\omega^{m},\omega^{m}))$, we see that $(\omega^{m},\omega^{m})$ is a limit point of every infinite sequence $x_{n}\in V((\omega^{m},\omega^{m}))$ consisting of mutually distinct elements.

For an arbitrary element $(y,0)\in \mathcal{B}_{m}$ put
\begin{equation*}
X_{(y,0)}=\{(0,x): (y,0)(0,x)=(y,x)\in V\}.
 \end{equation*}
Suppose that there exists an element $(y,0)\in\mathcal{B}_{m}$ for which the set $X_{(y,0)}$ is infinite. Let $X_{(y,0)}=\{(0,x_{k}): k\in\mathbb{N}\}$ be an enumeration of the set $X_{(y,0)}$.
Observe that
\begin{equation*}
(\omega^{m},\omega^{m})=\lim_{k\rightarrow\infty}((y,0)(0,x_{k}))=\lim_{k\rightarrow\infty}(y,x_{k}).
 \end{equation*}
Then for every ordinal $z<\omega^{m}$
\begin{equation*}
\lim_{k\rightarrow\infty}(z,x_{k})=(\omega^{m},\omega^{m}),
 \end{equation*}
because
\begin{equation*}
(\omega^{m},\omega^{m})=(z,y)(\omega^{m},\omega^{m})=(z,y)\lim_{k\rightarrow\infty}(y,x_{k})=
\lim_{k\rightarrow\infty}((z,y)(y,x_{k}))=\lim_{k\rightarrow\infty}(z,x_{k})
 \end{equation*}
For every $k\in\mathbb{N}$ let $c_{k}$ be the smallest ordinal such that $(x_{k},c_{k})\in V((\omega^{m},\omega^{m}))$. Since  $(0,0)\notin V^{2}((\omega^{m},\omega^{m}))$, there exists $k_{0}\in\mathbb{N}$ such that $c_{k}\neq 0$ for every $k>k_{0}$. Observe that $(\omega^{m},\omega^{m})$ is a zero of $\mathcal{B}_{m}$. The continuity of the multiplication in $(\mathcal{B_{\alpha}},\tau)$ implies that there exists an open compact neighborhood $O((\omega^{m},\omega^{m}))\subseteq V((\omega^{m},\omega^{m}))$ such that
\begin{equation*}
O((\omega^{m},\omega^{m}))(1,0)\cup O((\omega^{m},\omega^{m}))(\omega,0)\cup..\cup\ O((\omega^{m},\omega^{m}))(\omega^{m-1},0)\subseteq V((\omega^{m},\omega^{m})).
 \end{equation*}
But then there exists an infinite discrete space $\{(x_{k},c_{k}): k\in\mathbb{N}\}\subset V((\omega^{m},\omega^{m}))\setminus O((\omega^{m},\omega^{m}))$, which contradicts the compactness of $V((\omega^{m},\omega^{m}))$.
The obtained contradiction implies that $X_{(y,0)}$ is a finite set for every element $(y,0)\in \mathcal{B}_{m}$.

Since $V((\omega^{m},\omega^{m}))$ is infinite, there exist an infinite set $A=\{(y_{n},0): n\in\mathbb{N}\}\subset  \mathcal{B}_{m}$ such that $X_{(y_{n},0)}\neq\emptyset$. For an arbitrary element $(y_{n},0)\in A$ by $(0,z_{y_{n}})$ we denote an element of $X_{(y_{n},0)}$ with the greatest second coordinate. Clearly that $\{(y_{n},0)(0,z_{y_{n}})=(y_{n},z_{y_{n}})\}$ is an infinite sequence of $V((\omega^{m},\omega^{m}))$. Then we have that
\begin{equation*}
 \lim_{n\rightarrow \infty}(y_{n},z_{y_{n}})= (\omega^{m},\omega^{m}).
\end{equation*}
However,
\begin{equation*}
\begin{split}&
(\omega^{m},\omega^{m})=(\omega^{m},\omega^{m})(0,\omega^{m-1})=\lim_{n\rightarrow \infty}(y_{n},z_{y_{n}})(0,\omega^{m-1})=\\
&=\lim_{n\rightarrow \infty}(y_{n},\omega^{m-1}+z_{y_{n}})\neq (\omega^{m},\omega^{m}),\\
\end{split}
 \end{equation*}
because $\omega^{m-1}+z_{y_{n}}>z_{y_{n}}$, which contradicts the continuity of the multiplication in $(\mathcal{B}_{\alpha},\tau)$. Hence $(\mathcal{B_{\alpha}},\tau)$ is a discrete semigroup.

\end{proof}

However, the following example shows that there exists a non-discrete locally compact topology $\tau_{lc}$ on the $\mathcal{B}_{\omega+1}$ such that $(\mathcal{B}_{\omega+1},\tau_{lc})$ is a topological inverse semigroup.

\begin{example}\label{e1}
We define the topology $\tau_{lc}$ in the following way: all points distinct from $(n\omega^{\omega},m\omega^{\omega})$ for some positive integers $n,m$ are isolated, and the family $\mathscr{B}((n\omega^{\omega},m\omega^{\omega}))=\{U_{k}((n\omega^{\omega},m\omega^{\omega})): k\in\mathbb{N}\}$ forms a base of the topology $\tau_{lc}$ at the point $(n\omega^{\omega},m\omega^{\omega})$, where
\begin{equation*}
U_{k}((n\omega^{\omega},m\omega^{\omega}))=\{((n-1)\omega^{\omega}+\omega^{t},(m-1)\omega^{\omega}+\omega^{t}): t>k\}\cup\{(n\omega^{\omega},m\omega^{\omega})\}.
\end{equation*}
Clearly, $\tau_{lc}$ is a Hausdorff topology on $\mathcal{B}_{\omega+1}$. Since every open basic neighborhood of an arbitrary non isolated point $(n\omega^{\omega},m\omega^{\omega})$ is compact, $\tau_{lc}$ is locally compact.

It is obvious that for every positive integer $k$
\begin{equation*}
inv(U_{k}(n\omega^{\omega},m\omega^{\omega}))=U_{k}(m\omega^{\omega},n\omega^{\omega}).
\end{equation*}
Hence the inversion is continuous in $(\mathcal{B}_{\omega+1},\tau_{lc})$.

Let $a<\omega^{\omega+1}$ be an arbitrary ordinal. Below it will be convenient to use the following trivial modification of the Cantor's normal form of the ordinal $a$:
\begin{equation*}
a=n_{1}\omega^{\omega}+n_{2}\omega^{t_{2}}+..+n_{p}\omega^{t_{p}},
\end{equation*}
where $n_{1}$ is a non negative integer, $n_{2},..,n_{p}$ are positive integers and $\omega,t_{2},t_{3},..,t_{p}$ is a decreasing sequence of ordinals.

It is sufficient to check the continuity of the multiplication at the point $((a,b),(c,d))\in\mathcal{B}_{\omega+1}\times\mathcal{B}_{\omega+1}$ when at least one of the points $(a,b)$ or $(c,d)$ is non-isolated in $(\mathcal{B}_{\omega+1},\tau_{lc})$. Hence there are three cases to consider:
\begin{itemize}
  \item[(1)] $(n_{1}\omega^{\omega},m_{1}\omega^{\omega})(n\omega^{\omega},m\omega^{\omega})$, where $n,m,n_{1},m_{1}\in\mathbb{N}$;
  \item[(2)] $(a,b)(n\omega^{\omega},m\omega^{\omega})$, where $(a,b)$ is an isolated point in $(\mathcal{B}_{\omega+1},\tau_{lc})$, $n,m\in\mathbb{N}$;
  \item[(3)] $(n\omega^{\omega},m\omega^{\omega})(a,b)$, where $(a,b)$ is an isolated point in $(\mathcal{B}_{\omega+1},\tau_{lc})$, $n,m\in\mathbb{N}$;

\end{itemize}
Suppose that case $(1)$ holds, then we obtain the multiplication of the form
\begin{equation*}
(n_{1}\omega^{\omega},m_{1}\omega^{\omega})(n\omega^{\omega},m\omega^{\omega}).
\end{equation*}
It has the following three subcases:
\begin{itemize}
  \item[(1.1)] if $m_{1}<n$ then $(n_{1}\omega^{\omega},m_{1}\omega^{\omega})(n\omega^{\omega},m\omega^{\omega})=((n_{1}+n-m_{1})\omega^{\omega},m\omega^{\omega})$;
  \item[(1.2)] if $m_{1}=n$ then $(n_{1}\omega^{\omega},m_{1}\omega^{\omega})(n\omega^{\omega},m\omega^{\omega})=(n_{1}\omega^{\omega},m\omega^{\omega})$;
  \item[(1.3)] if $m_{1}>n$ then $(n_{1}\omega^{\omega},m_{1}\omega^{\omega})(n\omega^{\omega},m\omega^{\omega})=(n_{1}\omega^{\omega},(m+m_{1}-n)\omega^{\omega})$.
\end{itemize}
Consider subcase $(1.1)$. Let $U_{k}(((n_{1}+n-m_{1})\omega^{\omega},m\omega^{\omega}))$ be a basic open neighborhood of $((n_{1}+n-m_{1})\omega^{\omega},m\omega^{\omega})$. Then we state that
 \begin{equation*}
U_{k}((n_{1}\omega^{\omega},m_{1}\omega^{\omega}))U_{k}((n\omega^{\omega},m\omega^{\omega}))\subseteq U_{k}(((n_{1}+n-m_{1})\omega^{\omega},m\omega^{\omega})).
 \end{equation*}
Indeed, fix any elements
 \begin{equation*}
((n_{1}-1)\omega^{\omega}+\omega^{t},(m_{1}-1)\omega^{\omega}+\omega^{t})\in U_{k}((n_{1}\omega^{\omega},m_{1}\omega^{\omega}))
 \end{equation*}
 and
 \begin{equation*}
((n-1)\omega^{\omega}+\omega^{p},(m-1)\omega^{\omega}+\omega^{p})\in U_{k}((n\omega^{\omega},m\omega^{\omega})).
 \end{equation*}
 Then
 \begin{equation*}
 \begin{split}
 & ((n_{1}-1)\omega^{\omega}+\omega^{t},(m_{1}-1)\omega^{\omega}+\omega^{t})((n-1)\omega^{\omega}+\omega^{p},(m-1)\omega^{\omega}+\omega^{p})=\\
& ((n_{1}-1)\omega^{\omega}+\omega^{t}+((n-1)\omega^{\omega}+\omega^{p}-((m_{1}-1)\omega^{\omega}+\omega^{t})),(m-1)\omega^{\omega}+\omega^{p})=\\
& ((n_{1}-1)\omega^{\omega}+\omega^{t}+((n-1-m_{1}+1)\omega^{\omega}+\omega^{p}),(m-1)\omega^{\omega}+\omega^{p})=\\
& ((n_{1}-1+n-1-m_{1}+1)\omega^{\omega}+\omega^{p},(m-1)\omega^{\omega}+\omega^{p})=\\
& ((n_{1}+n-m_{1}-1)\omega^{\omega}+\omega^{p},(m-1)\omega^{\omega}+\omega^{p})\in U_{k}(((n_{1}+n-m_{1})\omega^{\omega},m\omega^{\omega})).\\
\end{split}
\end{equation*}
Consider subcase $(1.2)$. Then $m_{1}=n$. Let $U_{k}((n_{1}\omega^{\omega},m\omega^{\omega}))$ be a basic open neighborhood of $(n_{1}\omega^{\omega},m\omega^{\omega})$. Then we state that
 \begin{equation*}
 U_{k}((n_{1}\omega^{\omega},n\omega^{\omega}))U_{k}((n\omega^{\omega},m\omega^{\omega}))\subseteq U_{k}((n_{1}\omega^{\omega},m\omega^{\omega})).
\end{equation*}
Indeed, fix any elements
 \begin{equation*}
((n_{1}-1)\omega^{\omega}+\omega^{t},(n-1)\omega^{\omega}+\omega^{t})\in U_{k}((n_{1}\omega^{\omega},n\omega^{\omega}))
 \end{equation*}
 and
  \begin{equation*}
 ((n-1)\omega^{\omega}+\omega^{p},(m-1)\omega^{\omega}+\omega^{p})\in U_{k}((n\omega^{\omega},m\omega^{\omega})).
\end{equation*}
If $p>t$ then
\begin{equation*}
 \begin{split}
& ((n_{1}-1)\omega^{\omega}+\omega^{t},(n-1)\omega^{\omega}+\omega^{t})((n-1)\omega^{\omega}+\omega^{p},(m-1)\omega^{\omega}+\omega^{p})=\\
& ((n_{1}-1)\omega^{\omega}+\omega^{t}+((n-1)\omega^{\omega}+\omega^{p}-((n-1)\omega^{\omega}+\omega^{t})),(m-1)\omega^{\omega}+\omega^{p})=\\
& ((n_{1}-1)\omega^{\omega}+\omega^{t}+(\omega^{p}-\omega^{t}),(m-1)\omega^{\omega}+\omega^{p})=\\
& ((n_{1}-1)\omega^{\omega}+\omega^{p},(m-1)\omega^{\omega}+\omega^{p})\in U_{k}((n_{1}\omega^{\omega},m\omega^{\omega})).\\
\end{split}
\end{equation*}
If $p=t$ then we have the following:
\begin{equation*}
 \begin{split}
& ((n_{1}-1)\omega^{\omega}+\omega^{p},(n-1)\omega^{\omega}+\omega^{p})((n-1)\omega^{\omega}+\omega^{p},(m-1)\omega^{\omega}+\omega^{p})=\\
& ((n_{1}-1)\omega^{\omega}+\omega^{p},(m-1)\omega^{\omega}+\omega^{p})\in U_{k}((n_{1}\omega^{\omega},m\omega^{\omega})).\\
\end{split}
\end{equation*}
If $p<t$ then
\begin{equation*}
 \begin{split}
& ((n_{1}-1)\omega^{\omega}+\omega^{t},(n-1)\omega^{\omega}+\omega^{t})((n-1)\omega^{\omega}+\omega^{p},(m-1)\omega^{\omega}+\omega^{p})=\\
& ((n_{1}-1)\omega^{\omega}+\omega^{t},(m-1)\omega^{\omega}+\omega^{p}+((n-1)\omega^{\omega}+\omega^{t}-((n-1)\omega^{\omega}+\omega^{p})))=\\
& ((n_{1}-1)\omega^{\omega}+\omega^{t},(m-1)\omega^{\omega}+\omega^{p}+(\omega^{t}-\omega^{p}))=\\
& ((n_{1}-1)\omega^{\omega}+\omega^{t},(m-1)\omega^{\omega}+\omega^{t})\in U_{k}((n_{1}\omega^{\omega},m\omega^{\omega})).\\
\end{split}
\end{equation*}
Consider subcase $(1.3)$. In this subcase we have that $m_{1}>n$.\\
Let $U_{k}((n_{1}\omega^{\omega},(m+m_{1}-n)\omega^{\omega}))$ be a basic open neighborhood of $(n_{1}\omega^{\omega},(m+m_{1}-n)\omega^{\omega})$.\\
Then we state that
\begin{equation*}
U_{k}((n_{1}\omega^{\omega},m_{1}\omega^{\omega}))U_{k}((n\omega^{\omega},m\omega^{\omega}))\subseteq U_{k}((n_{1}\omega^{\omega},(m+m_{1}-n)\omega^{\omega})).
\end{equation*}
Indeed, fix any elements
\begin{equation*}
((n_{1}-1)\omega^{\omega}+\omega^{t},(m_{1}-1)\omega^{\omega}+\omega^{t})\in U_{k}((n_{1}\omega^{\omega},m_{1}\omega^{\omega}))
\end{equation*}
and
\begin{equation*}
((n-1)\omega^{\omega}+\omega^{p},(m-1)\omega^{\omega}+\omega^{p})\in U_{k}((n\omega^{\omega},m\omega^{\omega})).
\end{equation*}
 Then
 \begin{equation*}
  \begin{split}
 & ((n_{1}-1)\omega^{\omega}+\omega^{t},(m_{1}-1)\omega^{\omega}+\omega^{t})((n-1)\omega^{\omega}+\omega^{p},(m-1)\omega^{\omega}+\omega^{p})=\\
& ((n_{1}-1)\omega^{\omega}+\omega^{t},(m-1)\omega^{\omega}+\omega^{p}+((m_{1}-1)\omega^{\omega}+\omega^{t}-((n-1)\omega^{\omega}+\omega^{p})))=\\
& ((n_{1}-1)\omega^{\omega}+\omega^{t},(m-1)\omega^{\omega}+\omega^{p}+((m_{1}-1-n+1)\omega^{\omega}+\omega^{t}))=\\
& ((n_{1}-1)\omega^{\omega}+\omega^{t},(m+m_{1}-n-1)\omega^{\omega}+\omega^{t})\in U_{k}((n_{1}\omega^{\omega},(m+m_{1}-n)\omega^{\omega})).\\
\end{split}
\end{equation*}
Hence the continuity of the multiplication in $(\mathcal{B}_{\omega+1},\tau_{lc})$ holds in case $(1)$.

Consider case $(2)$.
It has the following three subcases:
\begin{itemize}
  \item[$(2.1)$] $a\neq n_{1}\omega^{\omega}$ and $b\neq m_{1}\omega^{\omega}$;
  \item[$(2.2)$] $a\neq n_{1}\omega^{\omega}$ and $b=m_{1}\omega^{\omega}$;
  \item[$(2.3)$] $a=n_{1}\omega^{\omega}$ and $b\neq m_{1}\omega^{\omega}$.
\end{itemize}
Consider subcase $(2.1)$
Let $a=n_{1}\omega^{\omega}+n_{2}\omega^{t_{2}}+..+n_{p}\omega^{t_{p}}$ and\\ $b=m_{1}\omega^{\omega}+m_{2}\omega^{r_{2}}+..+m_{c}\omega^{r_{c}}$, (note that $n_{1}$ and $m_{1}$ could be equal to $0$).\\
Then we have the following two subcases:
\begin{itemize}
  \item[$(2.1.1)$] if $m_{1}<n$ then
  \begin{equation*}
  (n_{1}\omega^{\omega}+n_{2}\omega^{t_{2}}+..+n_{p}\omega^{t_{p}},m_{1}\omega^{\omega}+m_{2}\omega^{r_{2}}+..+m_{c}\omega^{r_{c}})(n\omega^{\omega},m\omega^{\omega})=
  ((n_{1}+n-m_{1})\omega^{\omega},m\omega^{\omega});
  \end{equation*}
  \item[$(2.1.2)$] if $m_{1}\geq n$ then
  \begin{equation*}
  \begin{split}
  & (n_{1}\omega^{\omega}+n_{2}\omega^{t_{2}}+..+n_{p}\omega^{t_{p}},m_{1}\omega^{\omega}+m_{2}\omega^{r_{2}}+..+m_{c}\omega^{r_{c}})(n\omega^{\omega},m\omega^{\omega})=\\
  & =(n_{1}\omega^{\omega}+n_{2}\omega^{t_{2}}+..+n_{p}\omega^{t_{p}},(m+m_{1}-n)\omega^{\omega}+m_{2}\omega^{r_{2}}+..+m_{c}\omega^{r_{c}});\\
  \end{split}
   \end{equation*}
\end{itemize}
Let us prove the continuity in subcase $(2.1.1)$. Let $U_{k}( ((n_{1}+n-m_{1})\omega^{\omega},m\omega^{\omega}))$ be a basic open neighborhood of $((n_{1}+n-m_{1})\omega^{\omega},m\omega^{\omega})$. Note that
\begin{equation*}
(n_{1}\omega^{\omega}+n_{2}\omega^{t_{2}}+..+n_{p}\omega^{t_{p}},m_{1}\omega^{\omega}+m_{2}\omega^{r_{2}}+..+m_{c}\omega^{r_{c}})
\end{equation*}
is an isolated point in $(\mathcal{B}_{\omega+1},\tau_{lc})$. Then we state that
\begin{equation*}
 \begin{split}
 & (n_{1}\omega^{\omega}+n_{2}\omega^{t_{2}}+..+n_{p}\omega^{t_{p}},m_{1}\omega^{\omega}+m_{2}\omega^{r_{2}}+..+m_{c}\omega^{r_{c}})U_{r_{2}+t_{2}+k}((n\omega^{\omega},m\omega^{\omega}))\subseteq \\
  &
  \subseteq U_{k}(((n_{1}+n-m_{1})\omega^{\omega},m\omega^{\omega})). \\
  \end{split}
\end{equation*}
 Indeed, fix any element
 \begin{equation*}
 ((n-1)\omega^{\omega}+\omega^{t},(m-1)\omega^{\omega}+\omega^{t})\in U_{r_{2}+t_{2}+k}((n\omega^{\omega},m\omega^{\omega})).
 \end{equation*}
  Then
\begin{equation*}
\begin{split}
&
(n_{1}\omega^{\omega}+n_{2}\omega^{t_{2}}+..+n_{p}\omega^{t_{p}},m_{1}\omega^{\omega}+m_{2}\omega^{r_{2}}+..+m_{c}\omega^{r_{c}})((n-1)\omega^{\omega}+\omega^{t},(m-1)\omega^{\omega}+\\
&+\omega^{t})= (n_{1}\omega^{\omega}+n_{2}\omega^{t_{2}}+..+n_{p}\omega^{t_{p}}+((n-1)\omega^{\omega}+\omega^{t}-(m_{1}\omega^{\omega}+m_{2}\omega^{r_{2}}+..\\
&+m_{c}\omega^{r_{c}})),(m-1)\omega^{\omega}+\omega^{t})=\\
&=(n_{1}\omega^{\omega}+n_{2}\omega^{t_{2}}+..+n_{p}\omega^{t_{p}}+((n-1-m_{1})\omega^{\omega}+\omega^{t},(m-1)\omega^{\omega}+\omega^{t})=
\\
&
=((n_{1}+n-m_{1}-1)\omega^{\omega}+\omega^{t},(m-1)\omega^{\omega}+\omega^{t})\in U_{k}(((n_{1}+n-m_{1})\omega^{\omega},m\omega^{\omega})).
\\
\end{split}
\end{equation*}
Hence the continuity of the multiplication in subcase $(2.1.1)$ holds.

Consider subcase $(2.1.2)$. Note that both $(n_{1}\omega^{\omega}+n_{2}\omega^{t_{2}}+..+n_{p}\omega^{t_{p}},m_{1}\omega^{\omega}+m_{2}\omega^{r_{2}}+..+m_{c}\omega^{r_{c}})$
and $(n_{1}\omega^{\omega}+n_{2}\omega^{t_{2}}+..+n_{p}\omega^{t_{p}},(m+m_{1}-n)\omega^{\omega}+m_{2}\omega^{r_{2}}+..+m_{c}\omega^{r_{c}})$  are isolated points in the $(\mathcal{B}_{\omega+1},\tau_{lc})$. Then we state that
\begin{equation*}
 \begin{split}
 & (n_{1}\omega^{\omega}+n_{2}\omega^{t_{2}}+..+n_{p}\omega^{t_{p}},m_{1}\omega^{\omega}+m_{2}\omega^{r_{2}}+..+m_{c}\omega^{r_{c}})U_{0}((n\omega^{\omega},m\omega^{\omega}))= \\
  &
  =\{(n_{1}\omega^{\omega}+n_{2}\omega^{t_{2}}+..+n_{p}\omega^{t_{p}},(m+m_{1}-n)\omega^{\omega}+m_{2}\omega^{r_{2}}+..+m_{c}\omega^{r_{c}})\}. \\
  \end{split}
\end{equation*}
Indeed, fix any element
\begin{equation*}
((n-1)\omega^{\omega}+\omega^{t},(m-1)\omega^{\omega}+\omega^{t})\in U_{0}((n\omega^{\omega},m\omega^{\omega})).
\end{equation*}
 Then
\begin{equation*}
\begin{split}
&
(n_{1}\omega^{\omega}+n_{2}\omega^{t_{2}}+..+n_{p}\omega^{t_{p}},m_{1}\omega^{\omega}+m_{2}\omega^{r_{2}}+..+m_{c}\omega^{r_{c}})((n-1)\omega^{\omega}+\omega^{t},(m-1)\omega^{\omega}+\\
&+\omega^{t})=(n_{1}\omega^{\omega}+n_{2}\omega^{t_{2}}+..+n_{p}\omega^{t_{p}},(m-1)\omega^{\omega}+\omega^{t}+(m_{1}\omega^{\omega}+m_{2}\omega^{r_{2}}+..+m_{c}\omega^{r_{c}}-\\
&-((n-1)\omega^{\omega}+\omega^{t})))=\\
&(n_{1}\omega^{\omega}+n_{2}\omega^{t_{2}}+..+n_{p}\omega^{t_{p}},(m-1)\omega^{\omega}+\omega^{t}+((m_{1}-n+1)\omega^{\omega}+m_{2}\omega^{r_{2}}+..+m_{c}\omega^{r_{c}})=\\
&=(n_{1}\omega^{\omega}+n_{2}\omega^{t_{2}}+..+n_{p}\omega^{t_{p}},(m+m_{1}-n)\omega^{\omega}+m_{2}\omega^{r_{2}}+..+m_{c}\omega^{r_{c}}).
\\
\end{split}
\end{equation*}
Hence the continuity of the multiplication in subcase $(2.1)$ holds.

Consider subcase $(2.2)$. Let $a=n_{1}\omega^{\omega}+n_{2}\omega^{t_{2}}+..+n_{p}\omega^{t_{p}}$ and $b=m_{1}\omega^{\omega}$. Then we have the following two subcases:
\begin{itemize}
  \item[$(2.2.1)$] if $m_{1}<n$ then
  \begin{equation*}
  (n_{1}\omega^{\omega}+n_{2}\omega^{t_{2}}+..+n_{p}\omega^{t_{p}},m_{1}\omega^{\omega})(n\omega^{\omega},m\omega^{\omega})=
  ((n_{1}+n-m_{1})\omega^{\omega},m\omega^{\omega});
  \end{equation*}
  \item[$(2.2.2)$] if $m_{1}\geq n$ then
  \begin{equation*}
  \begin{split}
  & (n_{1}\omega^{\omega}+n_{2}\omega^{t_{2}}+..+n_{p}\omega^{t_{p}},m_{1}\omega^{\omega})(n\omega^{\omega},m\omega^{\omega})=\\
  & =(n_{1}\omega^{\omega}+n_{2}\omega^{t_{2}}+..+n_{p}\omega^{t_{p}},(m+m_{1}-n)\omega^{\omega});\\
  \end{split}
   \end{equation*}
\end{itemize}

Consider subcase $(2.2.1)$. Let $U_{k}( ((n_{1}+n-m_{1})\omega^{\omega},m\omega^{\omega}))$ be a basic open neighborhood of $ ((n_{1}+n-m_{1})\omega^{\omega},m\omega^{\omega})$. Note that $(n_{1}\omega^{\omega}+n_{2}\omega^{t_{2}}+..+n_{p}\omega^{t_{p}},m_{1}\omega^{\omega})$  is an isolated point in the  $(\mathcal{B}_{\omega+1},\tau_{lc})$. Then we state that
\begin{equation*}
 \begin{split}
 & (n_{1}\omega^{\omega}+n_{2}\omega^{t_{2}}+..+n_{p}\omega^{t_{p}},m_{1}\omega^{\omega})U_{t_{2}+k}((n\omega^{\omega},m\omega^{\omega}))\subseteq \\
  &
  \subseteq U_{k}(((n_{1}+n-m_{1})\omega^{\omega},m\omega^{\omega})). \\
  \end{split}
\end{equation*}
 Indeed, fix any element
 \begin{equation*}
 ((n-1)\omega^{\omega}+\omega^{t},(m-1)\omega^{\omega}+\omega^{t})\in U_{t_{2}+k}((n\omega^{\omega},m\omega^{\omega})).
 \end{equation*}
  Then
\begin{equation*}
\begin{split}
&
(n_{1}\omega^{\omega}+n_{2}\omega^{t_{2}}+..+n_{p}\omega^{t_{p}},m_{1}\omega^{\omega})((n-1)\omega^{\omega}+\omega^{t},(m-1)\omega^{\omega}+\omega^{t})=
 \\
&
=(n_{1}\omega^{\omega}+n_{2}\omega^{t_{2}}+..+n_{p}\omega^{t_{p}}+((n-1)\omega^{\omega}+\omega^{t}-m_{1}\omega^{\omega}),(m-1)\omega^{\omega}+\omega^{t})=
\\
&
=(n_{1}\omega^{\omega}+n_{2}\omega^{t_{2}}+..+n_{p}\omega^{t_{p}}+((n-1-m_{1})\omega^{\omega}+\omega^{t}),(m-1)\omega^{\omega}+\omega^{t})=
\\
&
=((n_{1}+n-m_{1}-1)\omega^{\omega}+\omega^{t},(m-1)\omega^{\omega}+\omega^{t})\in U_{k}(((n_{1}+n-m_{1})\omega^{\omega},m\omega^{\omega})).
\\
\end{split}
\end{equation*}
Hence the continuity of the multiplication in subcase (2.2.1) holds.

Consider subcase $(2.2.2)$. Note that both $(n_{1}\omega^{\omega}+n_{2}\omega^{t_{2}}+..+n_{p}\omega^{t_{p}},m_{1}\omega^{\omega})$
and $(n_{1}\omega^{\omega}+n_{2}\omega^{t_{2}}+..+n_{p}\omega^{t_{p}},(m+m_{1}-n)\omega^{\omega})$ are isolated points in the $(\mathcal{B}_{\omega+1},\tau_{lc})$. Then we state that
\begin{equation*}
 \begin{split}
 & (n_{1}\omega^{\omega}+n_{2}\omega^{t_{2}}+..+n_{p}\omega^{t_{p}},m_{1}\omega^{\omega})U_{0}((n\omega^{\omega},m\omega^{\omega}))= \\
  &
  =\{(n_{1}\omega^{\omega}+n_{2}\omega^{t_{2}}+..+n_{p}\omega^{t_{p}},(m+m_{1}-n)\omega^{\omega})\}. \\
  \end{split}
\end{equation*}
Indeed, fix any element
\begin{equation*}
((n-1)\omega^{\omega}+\omega^{t},(m-1)\omega^{\omega}+\omega^{t})\in U_{0}((n\omega^{\omega},m\omega^{\omega})).
\end{equation*}
Then
\begin{equation*}
\begin{split}
&
(n_{1}\omega^{\omega}+n_{2}\omega^{t_{2}}+..+n_{p}\omega^{t_{p}},m_{1}\omega^{\omega})((n-1)\omega^{\omega}+\omega^{t},(m-1)\omega^{\omega}+\omega^{t})=
 \\
&
=(n_{1}\omega^{\omega}+n_{2}\omega^{t_{2}}+..+n_{p}\omega^{t_{p}},(m-1)\omega^{\omega}+\omega^{t}+(m_{1}\omega^{\omega}-((n-1)\omega^{\omega}+\omega^{t})))=
\\
&
=(n_{1}\omega^{\omega}+n_{2}\omega^{t_{2}}+..+n_{p}\omega^{t_{p}},(m-1)\omega^{\omega}+\omega^{t}+((m_{1}-n+1)\omega^{\omega})=
\\
&
=(n_{1}\omega^{\omega}+n_{2}\omega^{t_{2}}+..+n_{p}\omega^{t_{p}},(m+m_{1}-n)\omega^{\omega}).
\\
\end{split}
\end{equation*}
Hence the continuity of the multiplication in subcase $(2.2)$ holds.

Consider subcase $(2.3)$. Let $a=n_{1}\omega^{\omega}$ and $b=m_{1}\omega^{\omega}+m_{2}\omega^{r_{2}}+..+m_{c}\omega^{r_{c}}$.

Then we have the following two subcases:
\begin{itemize}
  \item[$(2.3.1)$] if $m_{1}<n$ then
  \begin{equation*}
  (n_{1}\omega^{\omega},m_{1}\omega^{\omega}+m_{2}\omega^{r_{2}}+..+m_{c}\omega^{r_{c}})(n\omega^{\omega},m\omega^{\omega})=
  ((n_{1}+n-m_{1})\omega^{\omega},m\omega^{\omega});
  \end{equation*}
  \item[$(2.3.2)$] if $m_{1}\geq n$ then
  \begin{equation*}
  \begin{split}
  & (n_{1}\omega^{\omega},m_{1}\omega^{\omega}+m_{2}\omega^{r_{2}}+..+m_{c}\omega^{r_{c}})(n\omega^{\omega},m\omega^{\omega})=\\
  & =(n_{1}\omega^{\omega},(m+m_{1}-n)\omega^{\omega}+m_{2}\omega^{r_{2}}+..+m_{c}\omega^{r_{c}});\\
  \end{split}
   \end{equation*}
\end{itemize}
Consider subcase $(2.3.1)$. Let $U_{k}( ((n_{1}+n-m_{1})\omega^{\omega},m\omega^{\omega}))$ be a basic open neighborhood of $((n_{1}+n-m_{1})\omega^{\omega},m\omega^{\omega})$. Then we state that
\begin{equation*}
 \begin{split}
 & (n_{1}\omega^{\omega},m_{1}\omega^{\omega}+m_{2}\omega^{r_{2}}+..+m_{c}\omega^{r_{c}})U_{r_{2}+k}((n\omega^{\omega},m\omega^{\omega}))\subseteq \\
  &
  \subseteq U_{k}(((n_{1}+n-m_{1})\omega^{\omega},m\omega^{\omega})). \\
  \end{split}
\end{equation*}
 Indeed, fix any element
 \begin{equation*}
 ((n-1)\omega^{\omega}+\omega^{t},(m-1)\omega^{\omega}+\omega^{t})\in U_{r_{2}+k}((n\omega^{\omega},m\omega^{\omega})).
\end{equation*}
  Then
\begin{equation*}
\begin{split}
&
(n_{1}\omega^{\omega},m_{1}\omega^{\omega}+m_{2}\omega^{r_{2}}+..+m_{c}\omega^{r_{c}})((n-1)\omega^{\omega}+\omega^{t},(m-1)\omega^{\omega}+\omega^{t})=
 \\
&
=(n_{1}\omega^{\omega}+((n-1)\omega^{\omega}+\omega^{t}-(m_{1}\omega^{\omega}+m_{2}\omega^{r_{2}}+..+m_{c}\omega^{r_{c}})),(m-1)\omega^{\omega}+\omega^{t})=
\\
&
=(n_{1}\omega^{\omega}+((n-1-m_{1})\omega^{\omega}+\omega^{t}),(m-1)\omega^{\omega}+\omega^{t})=
\\
&
=((n_{1}+n-m_{1}-1)\omega^{\omega}+\omega^{t},(m-1)\omega^{\omega}+\omega^{t})\in U_{k}(((n_{1}+n-m_{1})\omega^{\omega},m\omega^{\omega})).
\\
\end{split}
\end{equation*}
Hence the continuity of the multiplication in subcase $(2.3.1)$ holds.

Now let's consider the subcase $(2.3.2)$. Note that both $(n_{1}\omega^{\omega},m_{1}\omega^{\omega}+m_{2}\omega^{r_{2}}+..\\
+m_{c}\omega^{r_{c}})$ and $(n_{1}\omega^{\omega},(m+m_{1}-n)\omega^{\omega}+m_{2}\omega^{r_{2}}+..+m_{c}\omega^{r_{c}})$  are isolated points in  the $(\mathcal{B}_{\omega+1},\tau_{lc})$. Then we state that
\begin{equation*}
 \begin{split}
 & (n_{1}\omega^{\omega},m_{1}\omega^{\omega}+m_{2}\omega^{r_{2}}+..+m_{c}\omega^{r_{c}})U_{0}((n\omega^{\omega},m\omega^{\omega}))= \\
  &
  =\{(n_{1}\omega^{\omega},(m+m_{1}-n)\omega^{\omega}+m_{2}\omega^{r_{2}}+..+m_{c}\omega^{r_{c}})\}. \\
  \end{split}
\end{equation*}
Indeed, fix any element
\begin{equation*}
((n-1)\omega^{\omega}+\omega^{t},(m-1)\omega^{\omega}+\omega^{t})\in U_{0}((n\omega^{\omega},m\omega^{\omega})).
\end{equation*}
 Then
\begin{equation*}
\begin{split}
&
(n_{1}\omega^{\omega},m_{1}\omega^{\omega}+m_{2}\omega^{r_{2}}+..+m_{c}\omega^{r_{c}})((n-1)\omega^{\omega}+\omega^{t},(m-1)\omega^{\omega}+\omega^{t})=
 \\
&
=(n_{1}\omega^{\omega},(m-1)\omega^{\omega}+\omega^{t}+(m_{1}\omega^{\omega}+m_{2}\omega^{r_{2}}+..+m_{c}\omega^{r_{c}}-((n-1)\omega^{\omega}+\omega^{t})))=
\\
&
=(n_{1}\omega^{\omega},(m-1)\omega^{\omega}+\omega^{t}+((m_{1}-n+1)\omega^{\omega}+m_{2}\omega^{r_{2}}+...+m_{c}\omega^{r_{c}})=
\\
&
=(n_{1}\omega^{\omega},(m+m_{1}-n)\omega^{\omega}+m_{2}\omega^{r_{2}}+..+m_{c}\omega^{r_{c}}).
\\
\end{split}
\end{equation*}
Hence continuity of the multiplication holds in case $(2)$.

Since the inversion is continuous in $(\mathcal{B}_{\omega+1},\tau_{lc})$ and
\begin{equation*}
((a,b)(n\omega^{\omega},m\omega^{\omega}))^{-1}=(m\omega^{\omega},n\omega^{\omega})(b,a)
 \end{equation*}
case $(2)$ implies that the semigroup operation in the case $(3)$ is continuous as well.

Hence $(\mathcal{B}_{\omega+1},\tau_{lc})$ is a topological inverse semigroup.
\end{example}

\section*{Acknowledgements}

The author acknowledges Oleg Gutik for his comments and suggestions.

\end{document}